\definecolor{mycolor}{rgb}{0.122, 0.435, 0.698}
\theoremstyle{plain} 
\newtheorem{teo}{Theorem}
\newtheorem{lem}{Lemma}
\newtheorem{prop}{Proposition}
\newtheorem{preg}{Question}
\newtheorem{refi}{Refinement}
\theoremstyle{definition}
\theoremstyle{remark}
\newtheorem{obs}{Remark}
\newtheorem{ej}{Example}
\newcommand{\mapeo}[5]{
	\begin{eqnarray*}
		#1:#2 & \longrightarrow & #3\\
		#4 & \longmapsto & #5
\end{eqnarray*}}
\newcommand{\U}[1]{U_{2\varepsilon_{#1}}(A_{#1})}
\newcommand{\todon}{n\in\mathbb{N}}
\renewcommand{\epsilon}{\varepsilon}
\newcommand{\ball}[2]{\textrm{B}(#1,#2)}
\newcommand{\dist}[2]{\textrm{d}(#1,#2)}
\newcommand{\fas}{\{\varepsilon_n,A_n\}_{\todon}}
\newcommand{\diam}{\textrm{diam}}
\newcommand{\card}{\textrm{card}}
\newcommand{\subjclass}[2][2010]{%
	\let\@oldtitle\@title%
	\gdef\@title{\@oldtitle\footnotetext{#1 \emph{Mathematics Subject Classification.} #2}}%
}
\newcommand{\keywords}[1]{%
	\let\@@oldtitle\@title%
	\gdef\@title{\@@oldtitle\footnotetext{\emph{Key words and phrases.} #1.}}%
}
\title{\bf{Finite approximations of countable metric and ultrametric compacta}}
\author{Diego Mondéjar}
\date{}                     
\subjclass{54B20, 54C60, 54F17, 54G15}
\keywords{Countable spaces, ultrametric spaces, hyperspaces, multivalued maps, finite topological spaces}
\begin{document}
	\maketitle
	\begin{abstract}
Adapting a homotopy reconstruction theorem for general metric compacta, we show that every countable metric or ultrametric compact space can be topologically reconstructed as the inverse limit of a sequence of finite $T_0$ spaces which are finer approximations of the space.
	\end{abstract} 
\section{Introduction}
The realization or approximation of compact metric spaces (and more general topological spaces) using inverse limits is a recurrent and significant theme in topology. For example, it is very useful for the handling of some attractors in dynamical systems, as solenoid spaces. See the beautiful surveys \cite{Kattractors, SGdynamical}. There are results back from Alexandroff \cite{Adiskrete} in this direction and, since then, a lot of results have been achieved. One of the fields where this problem is addressed is shape theory: roughly, the idea is to define spaces as inverse limits of inverse systems of simpler ones, so we can define maps between the spaces as maps between the systems for an easier treatment. For spaces with a "good" behaviour (\textsc{ANR}s), this is nothing but the homotopy type. But, for spaces with bad local properties, this enlarge the set of morphisms so we can still detect interesting topological properties, called shape properties. Shape theory was initiated by Borsuk \cite{Bconcerning} and subsequently developed by many authors. For a comprehensive treatment, we recommend the book \cite{MSshape} by Mardešić and Segal.

Our approach is closely related with shape theory. We use inverse sequences of finite topological spaces. The idea of computing topological properties from finite approximations of the space comes from Robins \cite{Rtowards}, where she introduced the concept of persistence of Betti numbers using inverse systems of finite approximations of the space in study and propose shape theory as a tool to perform this computation. In the last years, there has been an increasing interest in the use of finite topological spaces, related with the development of Computational and Applied Topology \cite{Ctopology, Gbarcodes}. A lot of work in finite topological spaces has been done, and its interest is still growing (see, for instance, \cite{Mfinitetopological, Mfinitecomplexes, Balgebraic}). There are recent reconstructions of topological properties of spaces by inverse systems or sequences of finite topological spaces. For instance, in \cite{Cinverse} it is shown that every compact polyhedron has the same homotopy type that an inverse limit of finite $T_0$ spaces containing a homeomorphic copy of the polyhedron. This result is generalized later in \cite{MMreconstruction} for compact metric spaces and in \cite{Bilskiinverse} for locally compact, paracompact Hausdorff spaces. In \cite{KWfinite, KTWthe}, every compact Hausdorff space is reconstructed as the Hausdorff reflection of an inverse limit of finite $T_0$ spaces. 

This paper is a continuation of the works \cite{MMreconstruction} and \cite{Mpolyhedral}, in which it is shown that for the general class of metric compacta, we can recover the homotopy type and the shape type as inverse limits of the finite approximations and of some polyhedra associated with them, respectively. The aim of this paper is to show that compact metric spaces which are countable or ultrametric can be completely reconstructed as inverse limits of finite $T_0$ spaces, which are approximations of the space considered. The importance of these reconstructions if that they are explicitly calculated and can be described and programmed for concrete examples, making them usable for practical computational goals as in \cite{MMreconstruction} and \cite{Chocomp}.

The article is structured in the following way. First, in the next section, we briefly retake the construction used in  \cite{MMreconstruction} and \cite{Mpolyhedral}, also using a detailed example for a simple space, the unit interval. Then, in Section \ref{sec:refine}, we consider some refinements of the construction to enhance its reconstruction capabilities. This machinery is used in Section \ref{result} to prove the main results, namely, that every countable metric or ultrametric compact space is homeomorphic to an inverse limit of finite $T_0$ spaces.

\section{Finite approximative sequences for compact metric spaces}
Here, we review the Main Construction for a compact metric space $X$ introduced in \cite{MCLepsilon} and sharpened in \cite{MMreconstruction} to obtain an inverse sequence of finite $T_0$ spaces that reconstructs homotopically $X$. Recall that finite topological spaces has a minimal base consisting of its minimal neighborhoods, that is, the intersection of every open set of each point. Finite $T_0$ spaces are in bijective correspondence with \textsc{poset}s and that continuous maps between them are just order preserving functions \cite{Mfinitetopological}. For the sake of completeness, we add here the definition of inverse sequences and limits. An \emph{inverse sequence} $\left(X_n,p_{n,n+1}\right)_{n\in\mathbb{N}}$ of topological spaces is a countable set of spaces $\left(X_n\right)_{n\in\mathbb{N}}$ and continuous maps, called \emph{bonding maps}, $p_{n,n+1}:X_{n+1}\rightarrow X_{n}$ for $n\in\mathbb{N}$. For every $n<m$, we will write $p_{n,m}:X_{m}\rightarrow X_{n}$ as the composition $p_{n,m}=p_{n,n+1}\circ p_{n+1,n+2}\circ\ldots\circ p_{m-1,m} $. The \emph{inverse limit} $\mathcal{X}=\varprojlim\left(X_n,p_{n,n+1}\right)_{n\in\mathbb{N}}$ of an inverse sequence $\left(X_n,p_{n,n+1}\right)_{n\in\mathbb{N}}$ is the subset of the product space $\prod_{n\in\mathbb{N}}X_n\supset\mathcal{X}$ consisting of the points $(x_1,x_2,\ldots,x_n,x_{n+1},\ldots)$ satisfying $p_{n,n+1}(x_{n+1})=x_n$ for every $n\in\mathbb{N}$. Note that open subsets in $\mathcal{X}$ are of the form $\prod_{n\in\mathbb{N}}U_n\cap\mathcal{X}$, where $U_n$ is open in $X_n$, only different from $X_n$ for a finite number of indexes.

The \emph{Main Construction} over $X$ states that there exists a decreasing sequence of real numbers $\left\lbrace\varepsilon_n\right\rbrace_{\todon}$ tending to zero and \emph{adjusted} to a sequence of discrete subsets $\left\lbrace A_n\right\rbrace_{\todon}$ of $X$, that is, for every $n\in\mathbb{N}$, $A_n$ is a finite $\varepsilon_n$ approximation of $X$ (for every $x\in X$ there is a point of $A_n$ closer than $\varepsilon_n$) satisfying  $\varepsilon_{n+1}<\frac{\varepsilon_n-\gamma_{n}}{2},$ where $\gamma_{n}=\sup\left\lbrace\dist{x}{A_n}:x\in X\right\rbrace<\varepsilon_n.$ The Main Construction can be performed in every compact metric space $X$ (Theorem 3 in \cite{MMreconstruction}). Now, we can define the spaces $$U_{2\varepsilon_n}(A_n)=\left\lbrace C\in 2^{A_n}_u: \diam(C)<2\varepsilon_n\right\rbrace,$$ where $2^Z_u$ for a topological space $Z$ denotes the \emph{hyperspace} of non-empty closed sets of $Z$ with the \emph{upper semifinite} topology (see \cite{MGupper,MGhomotopical}) given by the base $\left\lbrace B(U)\right\rbrace$, for every $U$ open set in $X$ and being $B(U)$ the set of elements $C$ of $2^Z$ that are contained in $U$. It turns out that this topology, when $Z$ is a discrete set, which is the case, is just a finite $T_0$ space, that is, a \textsc{poset}, given by the relation 
$C\leqslant D \Longleftrightarrow C\subseteq D$.
Hence, our spaces $U_{\varepsilon_n}(A_n)$ are finite $T_0$ spaces with its topology given by this relation and hence the minimal neighborhood for a point $C\in U_{2\varepsilon_n}(A_n)$ is the set $2^C$. Consider the map that connects two consecutive spaces \mapeo{p_{n,n+1}}{U_{2\varepsilon_{n+1}}(A_{n+1})}{U_{2\varepsilon_n}(A_n)}{C}{\bigcup_{c\in C}\left\lbrace a\in A_n:\dist{a}{c}=\dist{A_n}{c}\right\rbrace,} for every $n\in\mathbb{N}$. that is, the closest points in the next approximation. A \emph{\textsc{fas}} is an inverse sequence $\left(U_{2\varepsilon_n},p_{n,n+1}\right)_{n\in\mathbb{N}}$ obtained in this way by the Main Construction. With this term, we will refer interchangeably to the sequences of numbers and approximations $\left(\varepsilon_n, A_n\right)_{n\in\mathbb{N}}$, as well as to the inverse sequence itself, since the former completely determine the latter. Then, using associated inverse sequences of polyhedra we can reconstruct the shape type of $X$ \cite{Mpolyhedral}. Moreover, its homotopy type can be obtained as the inverse limit of the sequence, as shown in the following.
\begin{teo}[Theorem 4 of \cite{MMreconstruction}]
Let $X$ be a compact metric space. The inverse limit of every \textsc{fas} $$\mathcal{X}=\varprojlim\left(\U{n},p_{n,n+1}\right)_{n\in\mathbb{N}}$$ contains a subspace $\mathcal{X}^*\subset\mathcal{X}$ homeomorphic to $X$ which is a strong deformation retract of $\mathcal{X}$.
\end{teo}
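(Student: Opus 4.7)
I would build a section $\varphi:X\to\mathcal{X}$ using a nearest-point rule, set $\mathcal{X}^*:=\varphi(X)$, and then deform $\mathcal{X}$ onto it. The natural candidate is $\varphi_n(x)=\{a\in A_n:\dist{a}{x}=\dist{A_n}{x}\}$, which lies in $\U{n}$ because its diameter is at most $2\gamma_n<2\varepsilon_n$. The crucial compatibility $p_{n,n+1}\circ\varphi_{n+1}=\varphi_n$ must be squeezed out of the adjustment inequality $\varepsilon_{n+1}<(\varepsilon_n-\gamma_n)/2$, which is calibrated precisely so that nearest points of $A_n$ to elements of $\varphi_{n+1}(x)$ also nearest-approximate $x$ itself. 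The naive all-ties definition can break at Voronoi-boundary configurations, so I expect $\varphi$ may have to be refined recursively, forcing $\varphi_{n+1}(x)$ to lie inside $p_{n,n+1}^{-1}(\varphi_n(x))$ to guarantee coherence.

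With $\varphi$ in hand, embeddedness has three parts. Continuity is coordinate-wise: the minimal neighborhood of $\varphi_n(x)$ in the finite $T_0$ space $\U{n}$ is $2^{\varphi_n(x)}$, so it suffices to see that $\varphi_n(y)\subseteq\varphi_n(x)$ for $y$ sufficiently close to $x$; this is the elementary fact that small perturbations of $x$ can only break ties among nearest points, never create new ones. Injectivity follows from $\dist{a_n}{x}<\varepsilon_n\to 0$ for any selector $a_n\in\varphi_n(x)$. To promote to a homeomorphism without worrying about Hausdorffness of $\mathcal{X}$, I would construct a continuous retraction $r:\mathcal{X}\to X$ sending $(C_n)$ to the unique limit point $x(C)\in X$ of any selector $c_n\in C_n$; this limit exists because $\diam(C_n)<2\varepsilon_n$ and adjustedness forces $\varepsilon_n$ to decay geometrically, so such selectors are Cauchy. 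Then $r\circ\varphi=\mathrm{id}_X$, giving $\varphi^{-1}=r|_{\mathcal{X}^*}$ continuous.

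For the strong deformation, the structural fact I expect to exploit is the poset comparability $\varphi_n(x(C))\leqslant C_n$ in $\U{n}$ for every $n$, which the same adjustedness analysis should yield once $\varphi$ is set up correctly. Using these comparabilities, I assemble $H:\mathcal{X}\times[0,1]\to\mathcal{X}$ from coordinate-wise step-paths in each poset $\U{n}$, moving from $C_n$ down to $\varphi_n(x(C))$; the jump times $t_n\to 0$ must be scheduled so that each slice $H_t$ stays inside the inverse limit and $H$ is jointly continuous at $t=0$, which is the delicate point since basic open sets of $\mathcal{X}$ restrict only finitely many coordinates. Both the retraction property $H(\cdot,1)\in\mathcal{X}^*$ and the pointwise fixedness of $\mathcal{X}^*$ under $H$ fall out once the comparability is in place. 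The two main obstacles I anticipate are (i) pinning down the definition of $\varphi$ so that bonding-map compatibility holds even in the presence of Voronoi ties, and (ii) orchestrating the jump schedule of $H$ so that the intermediate tuples respect all bonding maps throughout $[0,1]$ while $H$ remains continuous on $\mathcal{X}\times[0,1]$.
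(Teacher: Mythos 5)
Your overall architecture (a section of the limit map plus a deformation exploiting the poset structure of each $\U{n}$) is the right one, but two of your key structural claims are where the actual difficulty lies, and both point in the wrong direction. First, the section. The paper does not use the nearest-point rule $x\mapsto\{a\in A_n:\dist{a}{x}=\dist{A_n}{x}\}$, nor a recursive repair of it: it sets $X^n:=A_n\cap\ball{x}{\varepsilon_n}$ (\emph{all} candidates within $\varepsilon_n$, not only the closest ones) and then $X_n^*:=\bigcap_{m>n}p_{n,m}(X^m)$. This is compatible with the bonding maps essentially by construction and turns out to equal $\bigcup_{\varphi(C)=x}C_n$, the union of the $n$-th coordinates of \emph{all} elements of $\mathcal{X}$ mapping to $x$, i.e.\ the maximal element of the fibre. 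Your proposed constraint $\varphi_{n+1}(x)\subseteq p_{n,n+1}^{-1}(\varphi_n(x))$ is left unconstructed, and it is not clear it can be met while keeping the values nonempty, of diameter $<2\varepsilon_{n+1}$, and continuous in $x$; the ``intersect the forward images of all $\varepsilon$-candidates'' device is the actual solution, and without it the coherence problem you correctly identify remains open.

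Second, and more seriously, the comparability you plan to exploit is false and has the wrong sign. You claim $\varphi_n(x(C))\leqslant C_n$ and want to move $C_n$ \emph{down} to it. In Example 1 of the paper, $C_1=\left(\{0\},\{1/3\},\ldots\right)$ converges to $1/2$, while the nearest-point set at stage $2$ is $\{1/3,2/3\}\not\subseteq\{1/3\}$, so the inclusion fails. The correct fact, for the maximal section $\phi$, is the opposite one: $C_n\subseteq X_n^*$, i.e.\ $C\leqslant\phi(\varphi(C))$ coordinatewise, and the homotopy jumps \emph{up} at the last instant: $H(C,t)=C$ for $t\in[0,1)$ and $H(C,1)=\phi(\varphi(C))$. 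Continuity at $t=1$ requires precisely that the endpoint dominate the path, since open sets in the upper semifinite topology are downward closed (the minimal neighbourhood of $D$ is $2^D$); a terminal jump down to a nearest-point set would be discontinuous even when the sets happened to be comparable. Once the direction is fixed, no schedule of jump times $t_n\to 0$ is needed --- a single jump at $t=1$ suffices and keeps every slice in the inverse limit. Your retraction $r$ (the paper's $\varphi$) and the observation that $r\circ\phi=\mathrm{id}_X$ upgrades the injection to an embedding are fine and agree with the paper.
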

We briefly review the proof of this theorem, for the sake of a better understanding of what follows.
\begin{proof}[Scketch of the proof]
The proof of the theorem is to construct natural maps from the inverse limit to the original space and backguard and then check that the compositions are homotopic to the identity.

Every element of the inverse limit is of the following form $$\left( C_n\right)_{n\in\mathbb{N}}
\Longleftrightarrow\left\lbrace
\begin{array}{ll}
C_n\in U_{2\varepsilon_n}(A_n)\\
p_{n,n+1}(C_{n+1})=C_n
\end{array}\right.\forall n\in\mathbb{N}.$$
It is shown in Proposition 3 of \cite{MMreconstruction} that ${\left(C_n\right)_{n\in\mathbb{N}}}$ is a sequence of points in the Hausdorff hyperspace $2^X_H$ converging --in the Hausdorff metric-- to a singleton $\{x\}\in 2^X_H$. See Figure \ref{convergence}. 
\begin{figure}[h!]
	\captionsetup{width=.8\linewidth}
	\centering
	\begin{minipage}{0.55\textwidth}
		$$\left(C_n\right)_{n\in\mathbb{N}}=\left(\includegraphics[scale=0.35]{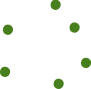}\enspace,\enspace\includegraphics[scale=0.35]{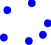}\enspace,\enspace\includegraphics[scale=0.35]{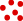}\enspace,\ldots\right)$$
	\end{minipage}
	\hfil
	\begin{minipage}{0.4\textwidth}
		\includegraphics[scale=0.45]{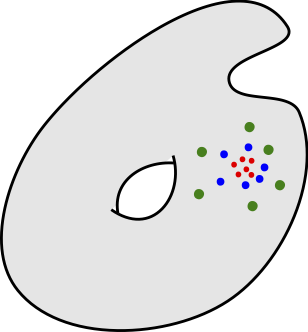}
	\end{minipage}
	\caption{An element $\left(C_n\right)_{n\in\mathbb{N}}$ of the inverse limit $\mathcal{X}$. Left: the sequence of points $\left(C_n\right)_{n\in\mathbb{N}}$ of the inverse limit $\mathcal{X}$. Right: the convergence of $\left(C_n\right)_{n\in\mathbb{N}}$ to a singleton $\{x\}\in 2^X_H$, that is, to a point $x\in X$ of the original space.}
	\label{convergence}
\end{figure}
Thus, it is natural to define a map from the inverse limit to the space as \mapeo{\varphi}{\mathcal{X}}{X}{\left(C_n\right)_{n\in\mathbb{N}}}{x,\enspace\text{with}\enspace\{x\}=\lim_H \left(C_n\right)_{n\in\mathbb{N}}.}For every $x\in X$ and $n\in\mathbb{N}$, we define the following sets: the first is formed by the elements in the approximation $A_n$ at distance smaller than $\varepsilon_n$ from $x$ and the second is the subset consisting of the intersection of all the images by the maps $p_{n,n+1}$ of this sets for every larger $n$:
$$
X^n:=A_n\cap\textsf{B}(x,\varepsilon_n)\enspace\enspace\text{and}\enspace\enspace
X_n^*:=\bigcap_{n<m} p_{n,m}(X^m)
$$
In this way, we consider all the candidates to be in the inverse limits and then select the ones that really make it true. This is the key idea of the proof. See Figure \ref{fig:connection}. 
\begin{figure}[h!]
	\captionsetup{width=.8\linewidth}
	\centering
	\includegraphics[width=0.5\linewidth]{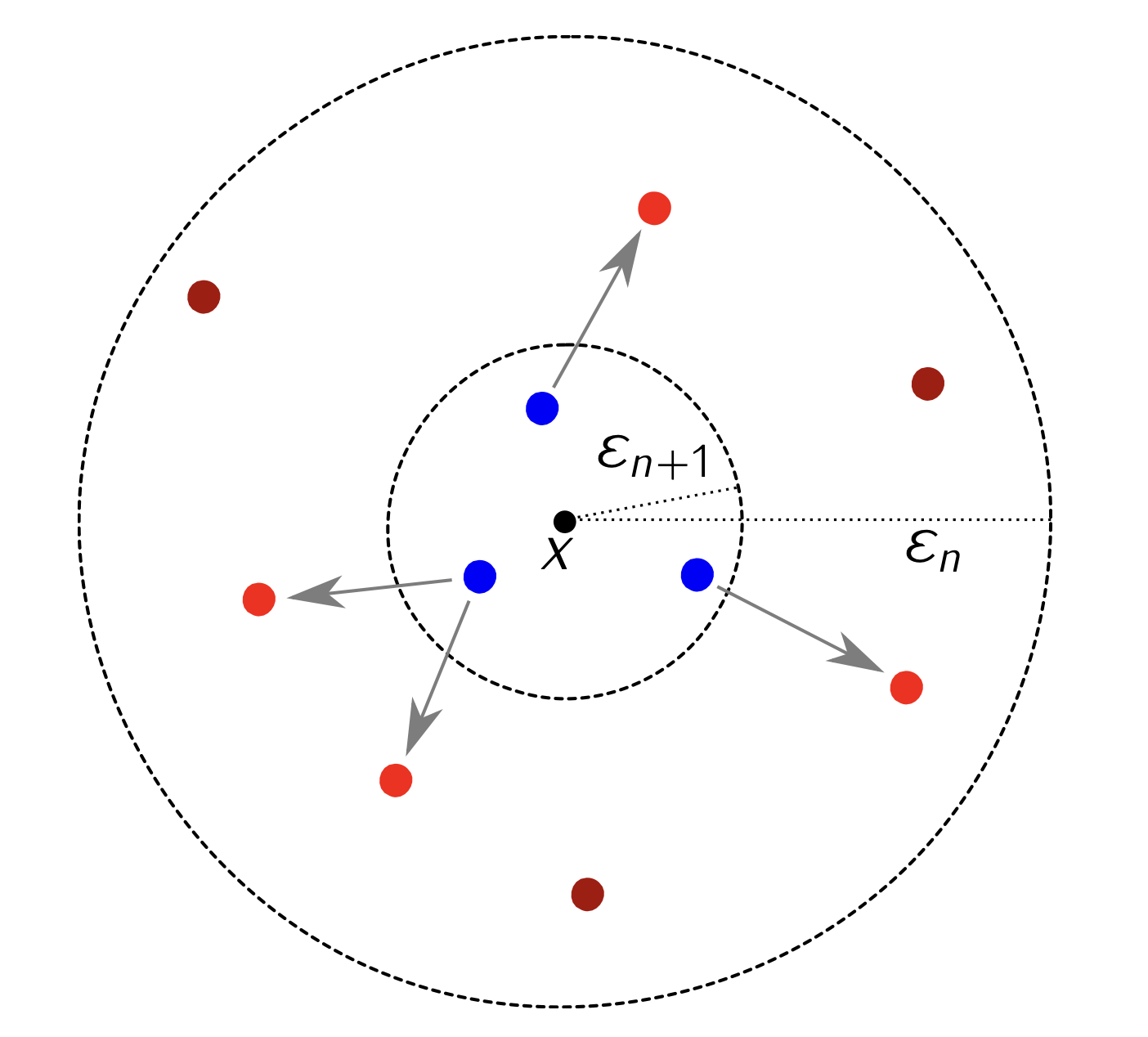}
	\caption{Connection of two consecutive stages. From the possible seven candidates of points in $X^n$ being in $X_n^*$, only four are reached as images of $p_{n,n+1}$.}
	\label{fig:connection}
\end{figure}
Then, we have that the sequence formed by the latter is an element of the inverse limit $\left(X_n^*\right)_{n\in\mathbb{N}}\in\mathcal{X}$. Also, it is shown that this elements are, in fact, the union of all the terms converging to the same $x$,
$$X_n^*=\bigcup_{\varphi(\{C_n\}_{n\in\mathbb{N}})=x}C_n,$$
which allow us to define the backguard map as \mapeo{\phi}{X}{\mathcal{X}}{x}{\left(X_n^*\right)_{n\in\mathbb{N}}}

This way, it is shown that $\varphi$ is surjective and $\phi$ is injective. Indeed, the subset $\mathcal{X}^*:=\phi(X)\subset\mathcal{X}$ is Hausdorff, and hence $\phi:X\rightarrow\mathcal{X}^*$ is a homeomorphism.  The two defined maps are homotopical inverses: $\varphi\cdot\phi:X\rightarrow X=1_X$ and $\phi\cdot\varphi:\mathcal{X}\rightarrow\mathcal{X}\simeq 1_{\mathcal{X}}$, with $H:\mathcal{X}\times[0,1]\rightarrow\mathcal{X}$ given by
\begin{equation*}
	H(\left(C_n\right)_{n\in\mathbb{N}},t) = \left\{
	\begin{array}{lll}
		\left(C_n\right)_{n\in\mathbb{N}} & \text{if} & t\in[0,1),\\
		\phi\cdot\varphi(\left(C_n\right)_{n\in\mathbb{N}}) & \text{if} & t=1.
	\end{array}\right.
\end{equation*}
\end{proof}
A simple example is very instructive for understanding the construction of these maps and their posterior adaptation to countable and ultrametric cases.
\begin{ej}\label{ex:interval}
We recall the example of the unit interval in \cite{MMreconstruction}. There, a \textsc{fas} for $I=[0,1]$ is constructed, by taking subdivisions of the interval of size some powers of $\frac{1}{3}$. Let us consider, $\varepsilon_1=2$ and $A_1=\{0\}$, we have $\gamma_1=1$. For $n>2$, consider $$\varepsilon_n=\frac{1}{3^{2n-3}}\enspace\text{and}\enspace A_n=\left\lbrace\frac{k}{3^{2n-3}}:k=0,1,\ldots,3^{2n-3}\right\rbrace.$$ The first two steps are depicted in Figure \ref{fig:interval}.
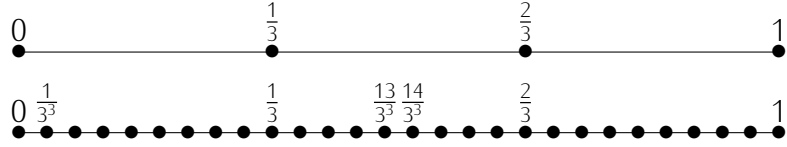
\begin{figure}[h!]
\captionsetup{width=.8\linewidth}
\centering
\begin{tikzpicture}[scale=10]
	\draw (0,0) -- (1,0);
	\draw [above] node at (0,0){0};
	\draw [above] node at (1,0){1};
	\draw [above] node at (0.33333333333,0){$\frac{1}{3}$};
	\draw [above] node at (0.66666666666,0){$\frac{2}{3}$};
	\foreach \x in {0,1/3,2/3,1} {\draw node at (\x,0) {$\bullet$} ;}
\end{tikzpicture}
\\
\begin{tikzpicture}[scale=10]
	\draw (0,0) -- (1,0);
	\draw [above] node at (0,0){0};
	\draw [above] node at (1,0){1};
	\draw [above] node at (0.33333333333,0){$\frac{1}{3}$};
	\draw [above] node at (0.66666666666,0){$\frac{2}{3}$};
	\draw [above] node at (0.03703703703,0){$\frac{1}{3^3}$};
	\draw [above] node at (0.48148148139,0){$\frac{13}{3^3}$};
	\draw [above] node at (0.51851851842,0){$\frac{14}{3^3}$};
	\foreach \x in {0,0.03703703703,...,1} {\draw node at (\x,0) {$\bullet$} ;}
\end{tikzpicture}
\caption{First two steps of the reconstruction of the unit interval. Note that every step is a refinement of the previous one.}
\label{fig:interval}
\end{figure}
Now we observe the elements of the inverse limit. Note that, in this example, $A_{n+1}$ is a subset of $A_n$ for all $\todon$. The image of a point $x\in I$ by the map $\phi$ can be of three types. The first, $x=\frac{k}{3^{2n_0-3}}$ is a point that is in the $\varepsilon_n$ approximations $A_n$ for every $n>n_0$, then $$\phi(x)=\left(\left\lbrace0\right\rbrace,p_{2,n}(\{x\}),\ldots,\overset{n_0}{\overset{\downarrow}{\{x\}}},\{x\},\ldots\right).$$ For instance, $0\in A_{n}$ for all $\todon$ and we have $\phi(0)=\left(0,0,\ldots\right)$. For points that are not in any of the approximations, we have a different structure. If a point is not special for these approximations, meaning it is not exactly in the middle of a pair of points, as $\frac{1}{5}$, then its image under $\phi$ is just the corresponding sequence of its expansion in base three, $$\phi\left(\frac{1}{5}\right)=\left(\left\lbrace0\right\rbrace,\left\lbrace\frac{1}{3}\right\rbrace,\ldots,\left\lbrace\frac{m}{3^{2n-3}}\right\rbrace,\ldots\right),$$ for some $m\in\mathbb{N}$ making $\frac{m}{3^{2n-3}}$ as close as possible to $\frac{1}{5}$. On the contrary, $\frac{1}{2}\notin A_n$ for any $\todon$, and it is exactly in the middle of two points of the approximations for every $n>1$, so we have $$\phi\left(\frac{1}{2}\right)=\left(\left\lbrace0\right\rbrace,\left\lbrace\frac{1}{3},\frac{2}{3}\right\rbrace,\left\lbrace\frac{14}{3^3},\frac{15}{3^3}\right\rbrace,\ldots,\left\lbrace\frac{\frac{3^{2n-3}-1}{2}}{3^{2n-3}},\frac{\frac{3^{2n-3}+1}{2}}{3^{2n-3}}\right\rbrace,\ldots\right).$$  This is the maximal element of $\mathcal{X}$ converging to $\frac{1}{2}$. But, two more elements do, namely 
\begin{align*}
C_1&=\left(\left\lbrace0\right\rbrace,\left\lbrace\frac{1}{3}\right\rbrace,\left\lbrace\frac{14}{3^3}\right\rbrace,\ldots,\left\lbrace\frac{\frac{3^{2n-3}-1}{2}}{3^{2n-3}}\right\rbrace,\ldots\right)\enspace\text{and}\nonumber\\
C_2&=\left(\left\lbrace0\right\rbrace,\left\lbrace\frac{2}{3}\right\rbrace,\left\lbrace\frac{15}{3^3}\right\rbrace,\ldots,\left\lbrace\frac{\frac{3^{2n-3}+1}{2}}{3^{2n-3}}\right\rbrace,\ldots\right).\nonumber
\end{align*}
So, there are three elements converging to $\frac{1}{2}$, thus making the map $\varphi$ not injective. 
\end{ej}

This failure in the injectivity of $\varphi$ is what causes that the inverse limit is homotopic but not homeomorphic to the original space. In general, the approximation of the \textsc{fas} is at homotopical level. The inverse limit $\mathcal{X}$ contains a homeomorphic copy $\phi(X)=\mathcal{X}^*$ of $X$ to which it strong deformation retract. But we can remake or tansform these approximations in order to make them more suitable for concrete topological spaces.

\section{Refining the Main Construction}{\label{sec:refine}}

In this section, we consider some refinements in the construction of our \textsc{fas} for compact metric spaces. Some of them will be used later to achieve complete reconstructions for countable spaces. Ultrametric spaces do not need these refinements to be completely reconstructed.

Let us generalize what we saw in Example \ref{ex:interval}: the map $\varphi$ is injective for every point that is in all approximations since some stage of the construction.
\begin{lem}\label{lem:inyectiva}
	Let $X$ be a compact metric space and suppose we perform the Main Construction obtaining a \textsc{fas} $\fas$. If $x\in X$ satisfies that there exists an $n_0\in\mathbb{N}$, with $x\in A_n$ for every $n\geqslant n_0$, then  $X_n^*=\{x\}$ for every $n\geqslant n_0$ and hence the cardinality of $\varphi^{-1}(x)$ is one, being $\varphi^{-1}(x)=X^*$.
\end{lem}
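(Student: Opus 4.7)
The plan is to establish $X_n^* = \{x\}$ for every $n \geq n_0$ by proving both inclusions, and then to deduce the uniqueness of the $\varphi$-preimage from the identification $X_n^* = \bigcup_{\varphi((C_k)) = x} C_n$ recalled in the sketch. For the forward inclusion $\{x\} \subseteq X_n^*$ I would argue directly: the hypothesis $x \in A_m$ for $m \geq n_0$ gives $d(x,x) = 0 < \varepsilon_m$, so $x \in X^m$, and for every $k \geq n_0$ the equality $d(A_k, x) = 0$ gives
\[
p_{k, k+1}(\{x\}) = \{a \in A_k : d(a, x) = d(A_k, x)\} = \{x\};
\]
iterating $m - n$ times then shows $x \in p_{n, m}(X^m)$ for every $m > n \geq n_0$, hence $x \in X_n^*$.

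For the reverse inclusion $X_n^* \subseteq \{x\}$, fix $y \in X_n^*$. For each $m > n$, the membership $y \in p_{n, m}(X^m)$ produces a chain $y = a_n, a_{n+1}, \ldots, a_m$ with $a_m \in X^m$ and $a_k$ a nearest point of $a_{k+1}$ in $A_k$. Since $x \in A_k$ for $k \geq n_0$, the nearest-neighbour condition gives $d(a_k, a_{k+1}) \leq d(x, a_{k+1})$, and hence $d(a_k, x) \leq 2\, d(a_{k+1}, x)$. Starting from $d(a_m, x) < \varepsilon_m$ and iterating this inequality yields $d(y, x) < 2^{m-n}\varepsilon_m$. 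The Main Construction inequality $\varepsilon_{k+1} < (\varepsilon_k - \gamma_k)/2$ telescopes to
\[
2^{m-n}\varepsilon_m < \varepsilon_n - \sum_{k=n}^{m-1} 2^{k-n}\gamma_k,
\]
so the bound on $d(y, x)$ shrinks with $m$; pushed to the limit, it pulls $y$ inside arbitrarily small neighbourhoods of $x$, and since $y \in A_n$ is one of only finitely many elements of the discrete set $A_n$, this eventually forces $y = x$.

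With $X_n^* = \{x\}$ for $n \geq n_0$ in hand, the uniqueness of $\varphi^{-1}(x)$ is immediate from the union formula: any $(C_k) \in \varphi^{-1}(x)$ satisfies $C_n \subseteq X_n^* = \{x\}$ for $n \geq n_0$, so $C_n = \{x\}$, and the remaining terms $C_n$ with $n < n_0$ are pinned down by iterated bonding $C_n = p_{n, n_0}(\{x\})$; the preimage is the single sequence $X^* = (X_n^*)_{n \in \mathbb{N}}$. The step I expect to be the main obstacle is making the telescoped bound truly beat the separation $\min\{d(x, a) : a \in A_n \setminus \{x\}\}$, which can in principle be very small if $A_n$ happens to contain points close to $x$; this requires exploiting the strict inequality $\varepsilon_{k+1} < (\varepsilon_k - \gamma_k)/2$ (so that $2^{m-n}\varepsilon_m$ decreases strictly and fast enough) rather than the weaker $\varepsilon_{k+1} < \varepsilon_k/2$, and may be best handled by complementing the chain-of-chains argument with the diameter constraint $\diam C_k < 2\varepsilon_k$ from the union characterization.
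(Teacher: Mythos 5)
Your forward inclusion $\{x\}\subseteq X_n^*$ is correct and coincides with the paper's argument. The gap is in the reverse inclusion, and it is exactly the obstacle you flag at the end --- but it is fatal to your route, not a loose end to be tightened. From $\textrm{d}(a_k,x)\leqslant 2\,\textrm{d}(a_{k+1},x)$ you get the upper bound $\textrm{d}(y,x)<2^{m-n}\varepsilon_m$, and the telescoped estimate only shows that $u_m=2^{m-n}\varepsilon_m$ is strictly decreasing; it need not tend to $0$, nor drop below the separation of $A_n$ near $x$. Concretely, on $X=[0,1]$ take $\varepsilon_k=2^{-k}(1+2^{-k})$ and $A_k$ a grid of mesh $2^{-2k-2}$, so $\gamma_k=2^{-2k-3}$; one checks $\varepsilon_{k+1}<(\varepsilon_k-\gamma_k)/2$, so this is a legitimate \textsc{fas}, yet $2^{m-n}\varepsilon_m\to 2^{-n}$, while $A_n$ contains on the order of $2^{n+2}$ points within distance $2^{-n}$ of any given $x$. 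Your bound therefore leaves all of these candidates alive, and neither squeezing more out of the strict adjustment inequality nor invoking the diameter constraint $\diam C_k<2\varepsilon_k$ removes them.

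The paper's proof reads the same two-sided inequality in the opposite direction. Writing $d_i=\textrm{d}(x,y_i)$ for a chain starting at $y_0=y\in X^*_{n_0}$ with $d_0>0$, the relation $d_i\leqslant 2d_{i+1}$ is used as the \emph{lower} bound $d_i\geqslant d_0/2^i$: a chain of nearest-point preimages can approach $x$ at most by halving its distance to $x$ at each step. This is combined with $d_i-d_{i+1}\leqslant\textrm{d}(y_i,y_{i+1})\leqslant\gamma_{n_0+i}$ (which holds because $x\in A_{n_0+i}$ forces the nearest-point jump to be short) and with $\varepsilon_{n_0+i+1}<(\varepsilon_{n_0+i}-\gamma_{n_0+i})/2$ to obtain $\varepsilon_{n_0+i+1}-d_{i+1}<\frac{\varepsilon_{n_0+i}-d_i}{2}-\frac{d_{i+1}}{2}\leqslant\frac{\varepsilon_{n_0+i}-d_i}{2}-\frac{d_0}{2^{i+2}}$, whence by induction $\varepsilon_{n_0+i}-d_i<\frac{2\varepsilon_{n_0}-(i+2)d_0}{2^{i+1}}$. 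The numerator becomes negative for large $i$, so $d_i>\varepsilon_{n_0+i}$ and the chain exits $X^{n_0+i}$, contradicting $y\in X^*_{n_0}$; the same argument applies at every $n\geqslant n_0$. The missing idea, then, is that the contradiction must come from the chain being unable to close in on $x$ fast enough to stay inside the shrinking balls $\textrm{B}(x,\varepsilon_{n_0+i})$, not from an upper bound on $\textrm{d}(y,x)$ vanishing. Your final deduction of $\card\,\varphi^{-1}(x)=1$ from the union formula is fine once $X_n^*=\{x\}$ is actually established.
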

\begin{proof}
	Let us suppose we obtain the sequences $\{\varepsilon_n,A_n,\gamma_n\}_{\todon}$ performing the Main Construction over $X$. We are going to prove that, if $x\in X$ belongs to $A_n$ for every $n\geqslant n_0$, then $$X^*=\left(p_{1,n_0}(\{x\}),\ldots,p_{n_0-1,n_0}(\{x\}),\overset{n_0}{\overset{\downarrow}{\{x\}}},\{x\},\ldots\right).$$ So, if this is true, there are no more points $C\in\mathcal{X}$ satisfying $\varphi(C)=x$, apart from $X^*$ (because of the maximality of $X^*$). Let us prove it. For $n\geqslant n_0$, we have that $x\in X_n=\textrm{B}(x,\varepsilon_n)\cap A_n$, and then, $x\in X_n^*$ for every $n\geqslant n_0$, because $p_{n,m}(\{x\})=\{x\}$ for every $n_0\leqslant n<m$. So $X^*$ has the form $$X^*=\left(p_{1,n_0}(X^*_{n_0}),\ldots,p_{n_0-1,n_0}(X^*_{n_0}),X^*_{n_0},X^*_{n_0+1},\ldots\right).$$ Now we prove that $X^*_n=\{x\}$ for every $n\geqslant n_0$. Consider $y_0\in X_{n_0}$ different from $x$. Then, $y\in X^*_{n_0}$ if and only if there exists, for every $i\in\mathbb{N}$, $y_i\in A_{n_0+i}$ such that $y\in p_{n_0+i,n_0+i+1}(y_{i+1})$ and $y_i\in X_{n_0+i}$ for every $i\in\mathbb{N}$. We are going to see that, if there is a chain of points satisfiying the first condition, they cannot satisfy the second. So, let us suppose there exists a chain $y_i\in A_{n_0+i}$, for every $i\in\mathbb{N}$ such that one belongs to the image of the following. For the sake of simplicity, let us write $d_i:=\textrm{d}(x,y_i)$ for $i\in\mathbb{N}$, (and $d_0=\textrm{d}(x,y_0)$). For every $i\in\mathbb{N}$, $y_{i+1}$ is closer (or at the same distance) to $y_i$ than to $x$, so we have $$d_{i+1}\geqslant\textrm{d}(y_i,y_{i+1})<\gamma_{n_0+i}.$$ Moreover, it is obvious that $d_i\leqslant d_{i+1}+\textrm{d}(y_{i+1},y_i)$, i.e., $d_i-d_{i+1}\leqslant\textrm{d}(y_{i+1},y_i)$. Combining this with the previous observation, we get $d_i-d_{i+1}<\gamma_{n_0+1}$. On the other hand, we have that, for every $i\in\mathbb{N}$, $d_i\leqslant d_{i+1}+\textrm{d}(y_{i+1},y_i)\leqslant 2d_{i+1}$, so $d_{i+m}\geqslant\frac{d_i}{2^m}$. We supposed $y_0\in X_{n_0}$, so $\varepsilon_{n_0}-d_0>0$. We claim that, for every $i\in\mathbb{N}$, $$\varepsilon_{n_0+i}-d_i<\frac{2\varepsilon_{n_0}-(i+2)d_0}{2^{i+1}}.$$ We prove it by induction. The first case is
	\begin{eqnarray*}
		\varepsilon_{n_0+1}-d_1 & < & \frac{\varepsilon_{n_0}-\gamma_{n_0}}{2}-d_1<\frac{\varepsilon_{n_0}-d_0}{2}-\frac{d_1}{2}\\
		& \leqslant & \frac{\varepsilon_{n_0}-d_0}{2}-\frac{d_0}{2^2}=\frac{2\varepsilon_{n_0}-3d_0}{2^2}.
	\end{eqnarray*}
	Now, suppose the hypothesis of induction is satisfied, and we check
	\begin{eqnarray*}
		\varepsilon_{n_0+i+1}-d_{i+1} & < & \frac{\varepsilon_{n_0+i}-\gamma_{n_0+i}}{2}-d_{i+1}<\frac{\varepsilon_{n_0+i}-d_{i}}{2}-\frac{d_{i+1}}{2}\\
		& < & \frac{2\varepsilon_{n_0}-(i+2)d_0}{2^{i+2}}-\frac{d_0}{2^{i+2}}=\frac{2\varepsilon_{n_0}-(i+3)d_0}{2^{i+2}}.
	\end{eqnarray*}
	It is obvious that there exists an $i\in\mathbb{N}$ such that $(i+3)d_0>2\varepsilon_{n_0}$. For this $i$, we have that $\varepsilon_{n_0}-d_i<0$, so $y_i\notin X_{n_0+i}$, and then, $y_0\notin X^*_{n_0}$. We conclude $X^*_{n_0}=\{x\}$ and the same argument can be applied to show that $X_n^*=\{x\}$, for every $n\geqslant n_0$
\end{proof}

In view of Lemma \ref{lem:inyectiva}, it is natural to look for \textsc{fas} making the map $\varphi$ the "more injective" possible, i.e., injective in the largest possible set of points. The first observation we can do is that the points in the approximations are the better candidates for that. Given a compact metric space $X$, a \textsc{fas} satisfying $A_n\subset A_{n+1}$ for every $\todon$ will be called \emph{nested}.

\begin{refi}[Existence of nested \textsc{fas}]
Every compact metric space has a \emph{nested} \textsc{fas}.
\end{refi}
\begin{proof}[Construction]
Let us consider $\epsilon_1>0$ any real number and $A_1$ any $\varepsilon_1$ approximation. Then, consider $\varepsilon_2$ adjusted to $\varepsilon_1$ and for $A_2$ take the union $A_2=A'_2\cup A_1$ where $A'_2$ is a $\epsilon_2$ approximation of $X$, then so is $A_2$. We can proceed in this way for every $\todon$. If we have that $A_n$ is a $\varepsilon_n$ approximation of $X$, consider $\epsilon_{n+1}$ adjusted to them. Then consider $A_{n+1}$ as the union $A'_{n+1}\cup A_n$ where $A'_{n+1}$ is a $\epsilon_{n+1}$ approximation of $X$ and, hence, $A_{n+1}$ too. In this way, we obtain the desired nested \textsc{fas} of $X$.
\end{proof}

\begin{refi}[Nesting preserves adjustment]\label{refi:nesting}
For every compact metric space $X$ and a \textsc{fas} $\left(\varepsilon_n, A_n\right)_{n\in\mathbb{N}}$, there exists a nested \textsc{fas} $\left(\mathscr{\varepsilon}_n, \mathscr{A}_n\right)_{n\in\mathbb{N}}$ with $A_n\subset\mathscr{A}_n$ for every $\todon$ and $\bigcup_{\todon} \mathscr{A}_n=\bigcup_{\todon} A_n$.
\end{refi}
\begin{proof}[Construction]
Consider, for every $\todon$, the $\varepsilon_n$ approximation $\mathscr{A}_n=\bigcup_{i=1}^n A_i$. Adding more points to the approximation $A_n$ does not change the fact that it is an $\varepsilon_n$ approximation (it might be a $\varepsilon'_n$ approximation with $\varepsilon'_n<\varepsilon_n$, but we are not interested in this). The sequence is still adjusted (with the same sequence of $\varepsilon_n$), since for these approximations, the sequence of $\gamma_n$ change to $\gamma'_n$ with $\gamma'_n\leqslant\gamma_n$ for all $\todon$, and hence \[\varepsilon_{n+1}<\frac{\varepsilon_n-\gamma'_n}{2}<\frac{\varepsilon_n-\gamma_n}{2}.\]
\end{proof}

So, we can work always with nested $\textsc{fas}$. The importance of nested $\textsc{fas}$ lies in the fact that they provide better injectivity for the map $\varphi$.
\begin{prop}\label{prop:injective}
Let $X$ be a compact metric space with a nested \textsc{fas} $\left(\varepsilon_n, A_n\right)_{n\in\mathbb{N}}$. Then $\varphi$ is injective on the set \[\varphi^{-1}\left(\bigcup_{\todon}A_n\right).\]
\end{prop}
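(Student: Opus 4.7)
The plan is to reduce the proposition immediately to Lemma \ref{lem:inyectiva}, exploiting the hypothesis that the \textsc{fas} is nested. The point of nestedness is that, for any $x\in\bigcup_{\todon}A_n$, the inclusion $x\in A_{n_0}$ at one index automatically propagates to $x\in A_n$ for every $n\geqslant n_0$, which is exactly the hypothesis needed to apply Lemma \ref{lem:inyectiva}.

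First, I would fix any $x\in\bigcup_{\todon}A_n$ and choose $n_0\in\mathbb{N}$ with $x\in A_{n_0}$. Since the \textsc{fas} is nested, $A_n\supset A_{n_0}\ni x$ for all $n\geqslant n_0$, so $x\in A_n$ for every $n\geqslant n_0$. Applying Lemma \ref{lem:inyectiva} to this $x$ yields $X_n^*=\{x\}$ for every $n\geqslant n_0$, and in particular the cardinality of $\varphi^{-1}(x)$ equals one, with $\varphi^{-1}(x)=X^*$, the unique element of $\mathcal{X}$ whose image under $\varphi$ is $x$.

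From this pointwise uniqueness, injectivity on the preimage is formal. If $(C_n)_{\todon},(D_n)_{\todon}\in\varphi^{-1}\bigl(\bigcup_{\todon}A_n\bigr)$ satisfy $\varphi\bigl((C_n)_{\todon}\bigr)=\varphi\bigl((D_n)_{\todon}\bigr)=x$, then $x\in\bigcup_{\todon}A_n$, so by the previous paragraph $\varphi^{-1}(x)$ is a singleton, forcing $(C_n)_{\todon}=(D_n)_{\todon}$. Hence $\varphi$ is injective on $\varphi^{-1}\bigl(\bigcup_{\todon}A_n\bigr)$, as claimed.

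The argument is essentially a one-line corollary of Lemma \ref{lem:inyectiva}; the only content to check is that the nesting hypothesis produces the eventual membership $x\in A_n$ for all $n\geqslant n_0$ required by the lemma. There is no real obstacle, since all the delicate work (the induction bounding $\varepsilon_{n_0+i}-d_i$) has already been carried out in the proof of Lemma \ref{lem:inyectiva}.
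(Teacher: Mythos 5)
Your argument is correct and is exactly the paper's route: the paper proves this proposition by the single line ``This follows immediately from Lemma \ref{lem:inyectiva}'', and your write-up simply makes explicit the two steps involved (nestedness upgrades $x\in A_{n_0}$ to $x\in A_n$ for all $n\geqslant n_0$, and the resulting singleton fibres give injectivity on the preimage). Nothing further is needed.
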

\begin{proof}
This follows immediately from Lemma \ref{lem:inyectiva}
\end{proof}

One important property of compact metric spaces is that they always have a countable dense subset. Since the reconstruction is by finite spaces and hence the union of all is countable, we would like to apply Lemma \ref{lem:inyectiva} in a dense subset of $X$ to obtain injectivity on it. Actually, the union of all approximations is a good candidate for that. 
\begin{prop}
For every \textsc{fas} $\left(\varepsilon_n, A_n\right)_{\todon}$ of a compact metric space $X$, the set $\bigcup_{\todon}A_n$ is always dense in $X$. 
\end{prop}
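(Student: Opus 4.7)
The plan is to unwind the definitions. Density of $\bigcup_{n\in\mathbb{N}} A_n$ in $X$ means that for every $x\in X$ and every $\delta>0$ the open ball $\mathrm{B}(x,\delta)$ meets $\bigcup_{n\in\mathbb{N}} A_n$. So I would fix $x\in X$ and $\delta>0$ arbitrarily, and produce a point of some $A_n$ inside $\mathrm{B}(x,\delta)$.

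The key ingredients are built into the definition of a \textsc{fas}: the sequence $\{\varepsilon_n\}_{n\in\mathbb{N}}$ is decreasing and tends to zero, and each $A_n$ is an $\varepsilon_n$-approximation of $X$, i.e.\ for every $y\in X$ there exists $a\in A_n$ with $\mathrm{d}(y,a)<\varepsilon_n$. So first I pick $n_0\in\mathbb{N}$ large enough that $\varepsilon_{n_0}<\delta$, which is possible because $\varepsilon_n\to 0$. Then, applying the $\varepsilon_{n_0}$-approximation property to the chosen $x$, I obtain $a\in A_{n_0}$ with $\mathrm{d}(x,a)<\varepsilon_{n_0}<\delta$. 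This point $a$ lies in $A_{n_0}\subseteq \bigcup_{n\in\mathbb{N}}A_n$ and in $\mathrm{B}(x,\delta)$, which is exactly what density requires.

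There is essentially no obstacle here; the statement is a direct reformulation of the fact that the mesh $\varepsilon_n$ of the successive approximations shrinks to zero. No use of the adjustment condition $\varepsilon_{n+1}<(\varepsilon_n-\gamma_n)/2$ or of the bonding maps is needed, only the $\varepsilon_n$-approximation property of each $A_n$ together with $\lim_{n\to\infty}\varepsilon_n=0$.
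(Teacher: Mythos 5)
Your proof is correct and follows essentially the same route as the paper: both arguments fix a target ball, choose $n_0$ with $\varepsilon_{n_0}$ smaller than its radius using $\varepsilon_n\to 0$, and then invoke the $\varepsilon_{n_0}$-approximation property of $A_{n_0}$ to land a point of $\bigcup_{n\in\mathbb{N}}A_n$ inside the ball. Your closing remark that the adjustment condition and bonding maps play no role is also accurate.
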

\begin{proof}
For each open set $U\subset X$ there exists $x\in U$ and $\varepsilon>0$ such that $\textrm{B}(x,\epsilon)\subset U$. Let us select $n_0\in\mathbb{N}$ such that $\textrm{B}(x,\epsilon_{n_0})\subset\textrm{B}(x,\epsilon)$. Then, for any $a\in A_{n_0}$ with $\textrm{d}(x,a)<\epsilon_{n_0}$, we have that $\textrm{B}(x,\epsilon)\cap A_{n_0}\neq\emptyset$.
\end{proof}

For the last refinement, we first show that we can find the approximations using only points of dense subsets.
\begin{lem}\label{lem:dense}
 For every dense subset $Y\subset X$ of a compact metric space, and every $\epsilon>0$, there exists a finite $\epsilon$-approximation $A\subset Y$. 
\end{lem}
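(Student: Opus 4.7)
The plan is to combine compactness of $X$ with density of $Y$ in a straightforward way. The only delicate point is that the $\varepsilon$-net must consist of points of $Y$, not of arbitrary points of $X$, so I need a little slack in the triangle inequality.

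First I would pick a smaller radius, say $\varepsilon/2$, and use compactness: the open cover $\{\textrm{B}(x,\varepsilon/2):x\in X\}$ admits a finite subcover, giving points $x_1,\ldots,x_n\in X$ with
\[X\subset\bigcup_{i=1}^n \textrm{B}(x_i,\varepsilon/2).\]
Next I would use density of $Y$: for each $i=1,\ldots,n$ the ball $\textrm{B}(x_i,\varepsilon/2)$ is a non-empty open subset of $X$, hence meets $Y$, so we may pick $a_i\in Y\cap\textrm{B}(x_i,\varepsilon/2)$. Set $A:=\{a_1,\ldots,a_n\}\subset Y$, a finite subset of $Y$.

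Finally I would verify that $A$ is an $\varepsilon$-approximation of $X$. Given any $x\in X$, by the covering property there is some $i$ with $\textrm{d}(x,x_i)<\varepsilon/2$, and by choice of $a_i$ we have $\textrm{d}(x_i,a_i)<\varepsilon/2$; the triangle inequality then gives
\[\textrm{d}(x,a_i)\leqslant \textrm{d}(x,x_i)+\textrm{d}(x_i,a_i)<\frac{\varepsilon}{2}+\frac{\varepsilon}{2}=\varepsilon,\]
so $\textrm{d}(x,A)<\varepsilon$, as required.

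There is no real obstacle here; the only subtlety is remembering to cover with balls of radius $\varepsilon/2$ rather than $\varepsilon$ so that the replacement of each center $x_i$ by a nearby point $a_i\in Y$ still leaves us within $\varepsilon$ of any point of $X$.
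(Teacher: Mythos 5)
Your proof is correct and follows exactly the paper's argument: cover $X$ by balls of radius $\varepsilon/2$, extract a finite subcover by compactness, replace each center by a point of $Y$ within $\varepsilon/2$, and conclude by the triangle inequality. You merely spell out the final triangle-inequality verification that the paper leaves implicit.
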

\begin{proof}
Let us consider the covering $\lbrace\ball{x}{\frac{\varepsilon}{2}}:x\in X\rbrace$ and a finite subcovering $\lbrace\ball{x_1}{\frac{\epsilon}{2}},\ldots,\ball{x_k}{\frac{\epsilon}{2}}\rbrace$. Now we take $y_1,\ldots,y_k\in Y$ such that $\dist{x_i}{y_i}<\frac{\epsilon}{2}$ for every $i=1,\ldots,k$, so $\{y_1,\ldots,y_k\}$ is an $\epsilon$-approximation of $X$.
\end{proof}

\begin{refi}[Approximations in dense subsets]\label{refi:dense}
Let $X$ be a compact metric space and $Y\subset X$ a dense subset. There is a nested \textsc{fas} such that $\bigcup_{\todon}A_n\subseteq Y$. Moreover, if $Y$ is countable, there is a nested \textsc{fas} also satisfying $\bigcup_{\todon}A_n=Y$.
\end{refi}
\begin{proof}[Construction]
The first \textsc{fas} can be obtained just applying Lemma \ref{lem:dense} to every $\varepsilon_n$ and making it nested with Refinement \ref{refi:nesting}. For the second condition, let us write $Y=\{y_1,y_2,\ldots,y_n,\ldots\}$. We just have to be sure that every element of $Y$ is included in $A_n$ for some $\todon$. There are several ways of doing so, two of which we describe next. First, we can take as finite approximations $A_1=\{y_1\}\cup  A'_1$, with $A'_1\subset Y$ an $\varepsilon_1$ approximation of $X$ and, for every $n>1$, $A_n=\{y_n\}\cup A_{n-1}\cup A'_n$ with $A'_n\subset Y$ an $\varepsilon_n$ approximation of $X$. This is possible since $Y$ is dense in $X$ by Lemma \ref{lem:dense}. A second way of doing it, and more explicit, could be to choose, for every $\todon$, the numbers $$r(n)=\min\big{\lbrace} i\in\mathbb{N}:\{y_1,\ldots,y_i\} \enspace\textrm{is a}\enspace\epsilon_n\enspace\textrm{approximation of}\enspace X\big{\rbrace},$$ it is clear that $r(n+1)\geqslant r(n)$ and then we can write the approximations as $A_n=\{y_1,\ldots,y_{r(n)}\}$ for every $\todon$ and we are done.
\end{proof}

We can state  the main result for countable dense subsets: they can be homeomorphically reconstructed as a subspace of the inverse limits. 
\begin{teo}
	For every countable dense subset of a compact metric space, $Y\subset X$, there exists a \textsc{fas} of $X$ such that there is a dense subset of $\mathcal{X}^*$ which is homeomorphic to $Y$. 
\end{teo}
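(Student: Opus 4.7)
The plan is to assemble the results already established in this section. First, I would apply Refinement \ref{refi:dense} to the countable dense subset $Y\subset X$ to obtain a nested \textsc{fas} $(\varepsilon_n,A_n)_{\todon}$ with $\bigcup_{\todon}A_n=Y$. The existence of such a \textsc{fas} is exactly what that refinement provides, so no extra work is needed here.

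Next, I would invoke the main reconstruction theorem (Theorem~1) to get the map $\phi:X\to\mathcal{X}^*$, which is a homeomorphism onto $\mathcal{X}^*=\phi(X)\subset\mathcal{X}$. I then consider the restriction $\phi|_Y:Y\to\phi(Y)$. Being the restriction of a homeomorphism, this is a homeomorphism onto its image $\phi(Y)\subset\mathcal{X}^*$, which supplies the subset of $\mathcal{X}^*$ homeomorphic to $Y$ demanded in the statement. Proposition~\ref{prop:injective} is what ensures the well-behavedness of this restriction, since it tells us that on $\bigcup_{\todon}A_n=Y$ the map $\varphi$ is truly injective, so the elements $\phi(y)$ for $y\in Y$ are uniquely characterized by their image under $\varphi$ and coincide with the maximal elements $X^*$ described in Lemma~\ref{lem:inyectiva}.

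The remaining point is to verify density of $\phi(Y)$ in $\mathcal{X}^*$. This is immediate from the fact that $\phi:X\to\mathcal{X}^*$ is a homeomorphism and $Y$ is dense in $X$: a homeomorphism carries dense subsets to dense subsets of the image, so $\phi(Y)$ is dense in $\mathcal{X}^*$.

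There is no substantial obstacle: the entire argument is a careful packaging of Refinement~\ref{refi:dense}, Theorem~1, and Proposition~\ref{prop:injective}. The only thing one must be attentive to is that the \textsc{fas} chosen must both exhaust $Y$ and be nested, so that Proposition~\ref{prop:injective} applies on all of $Y$; this is precisely the content of Refinement~\ref{refi:dense}, which is why that refinement was tailored to output a nested \textsc{fas} with $\bigcup_{\todon}A_n=Y$.
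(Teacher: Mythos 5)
Your argument is logically valid for the statement as literally written, but it takes a genuinely different --- and much shorter --- route than the paper, and in doing so it bypasses what the theorem is really after. Once you invoke Theorem 1 to get that $\phi:X\rightarrow\mathcal{X}^*$ is a homeomorphism, the conclusion ``$\mathcal{X}^*$ contains a dense subset homeomorphic to $Y$'' follows for \emph{any} dense subset $Y\subset X$ and \emph{any} \textsc{fas}: neither the countability of $Y$ nor the \textsc{fas} produced by Refinement \ref{refi:dense} does any work in your proof, which should be a warning sign. The paper's proof produces a \emph{specific} dense subset, namely the full preimage $\varphi^{-1}(Y)$, and shows that $\varphi$ restricted to it is a bijection onto $Y$ (hence a homeomorphism, being a restriction of $\phi^{-1}$); this is exactly where Proposition \ref{prop:injective}, and hence the nested \textsc{fas} with $\bigcup_{\todon}A_n=Y$, is indispensable, since for a general \textsc{fas} the fibre $\varphi^{-1}(y)$ may contain several threads (recall the three elements of $\mathcal{X}$ lying over $\frac{1}{2}$ in Example \ref{ex:interval}). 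Under the chosen \textsc{fas} your set $\phi(Y)$ coincides with $\varphi^{-1}(Y)$ by Lemma \ref{lem:inyectiva} --- a point you do observe --- so you end up with the same subspace, and your density argument (transporting the density of $Y$ in $X$ through the homeomorphism $\phi$) is a clean substitute for the paper's explicit basis argument inside $\mathcal{X}^*$. In short: your proof is correct, but to capture the intended content --- the version of the statement that the subsequent remark and Example \ref{ex:unnested} actually use --- you should state and prove that the dense copy of $Y$ is the whole of $\varphi^{-1}(Y)$, which is what forces the particular choice of \textsc{fas} and makes the hypotheses earn their keep.
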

\begin{proof}
	By Refinement \ref{refi:dense}, it is easy to obtain a \textsc{fas} $\fas$ of $X$ such that $A_n\subset A_{n+1}$, for every $\todon$, and $\bigcup_{\todon}A_n=Y$. 
	
	If we restrict the map $\varphi:\mathcal{X}\rightarrow X\supset Y=\bigcup_{\todon}A_n$ to the set $\varphi^{-1}(Y)$, we obtain by Proposition \ref{prop:injective} that $$\varphi\mid_{\varphi^{-1}(Y)}:\varphi^{-1}(Y)\longrightarrow Y$$ is injective and hence a homeomorphism. So $\varphi^{-1}$ is the desired set. We have the inclusions $\varphi^{-1}(Y)\subset\mathcal{X}^*\subset\mathcal{X}$, by Lemma \ref{lem:inyectiva}. Now, to see that $\varphi^{-1}(Y)$ is dense in $\mathcal{X}^*$. Let $V$ be any open set of $\mathcal{X}^*$ and $C\in V$ any point of it, where $C=(C_1,C_2,\ldots,C_n,\ldots)$. By Lemma 1 in \cite{MMreconstruction}, we can choose an open neighborhood $W$ of the form $$C\in W=\left(2^{C_1}\times\ldots\times2^{C_m}\times U_{2\epsilon_{m+1}}(A_{m+1})\times\ldots\right)\cap\mathcal{X}\subset V,$$ and select any $c\in C_m$. Then, there exists an index $n_0$ such that $c^*=(\ldots,\overset{n_0}{\overset{\downarrow}{\{c\}}},\{c\},\ldots)$, because of Lemma \ref{lem:inyectiva}. So $c^*\in W\cap\varphi^{-1}(Y)\subset V\cap\varphi^{-1}(Y)$, which implies $\overline{\varphi^{-1}(Y)}=\mathcal{X}^*$.
\end{proof}
\begin{obs}
	The inclusion $\varphi^{-1}(Y)$ of last proposition is proper: recall Example \ref{ex:interval} where $Y=\bigcup_{\todon}A_n$, with $A_n=\left\lbrace\frac{k}{3^{2n-3}}:k=0,1,\ldots,3^{2n-3}\right\rbrace$ and, while $\frac{1}{2}^*$ is obviously an element of $\mathcal{X}^*$, it does not belong to $\varphi^{-1}(Y)$, since $\frac{1}{2}$ does not belong to any approximation $A_n$.
\end{obs}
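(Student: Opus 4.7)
The plan is to verify the claim by inspecting the specific \textsc{fas} from Example~\ref{ex:interval} and checking the two assertions: that $\tfrac{1}{2}^{*}$ lies in $\mathcal{X}^{*}$, and that it fails to lie in $\varphi^{-1}(Y)$.

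First I would recall that, by construction, $\mathcal{X}^{*}=\phi(X)$ and that $\varphi$ restricted to $\mathcal{X}^{*}$ is the inverse of $\phi$. Hence for any $x\in X=[0,1]$ the point $x^{*}:=\phi(x)$ automatically belongs to $\mathcal{X}^{*}$, and $\varphi(x^{*})=x$. Applying this to $x=\tfrac{1}{2}\in[0,1]$ yields $\tfrac{1}{2}^{*}\in\mathcal{X}^{*}$, settling the first half of the claim. (The concrete form of $\tfrac{1}{2}^{*}$ was already written down in Example~\ref{ex:interval}: its $n$-th coordinate is the two-element set $\{(3^{2n-3}-1)/(2\cdot 3^{2n-3}),\,(3^{2n-3}+1)/(2\cdot 3^{2n-3})\}$, so in particular $\tfrac{1}{2}^{*}\neq\phi(y)$ for any $y\in Y$ with a singleton image.)

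Next, since $\varphi\circ\phi=1_{X}$ on $\mathcal{X}^{*}$, membership $\tfrac{1}{2}^{*}\in\varphi^{-1}(Y)$ is equivalent to $\tfrac{1}{2}\in Y=\bigcup_{n\in\mathbb{N}}A_{n}$. So I reduce the problem to a purely arithmetic check: does there exist $n\in\mathbb{N}$ and $k\in\{0,1,\ldots,3^{2n-3}\}$ with $\tfrac{k}{3^{2n-3}}=\tfrac{1}{2}$? This forces $2k=3^{2n-3}$, which is impossible because $3^{2n-3}$ is an odd integer while $2k$ is even. Hence $\tfrac{1}{2}\notin A_{n}$ for every $n$, and therefore $\tfrac{1}{2}^{*}\notin\varphi^{-1}(Y)$.

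Combining the two steps exhibits $\tfrac{1}{2}^{*}\in\mathcal{X}^{*}\setminus\varphi^{-1}(Y)$, which is exactly what is needed to conclude that the inclusion $\varphi^{-1}(Y)\subset\mathcal{X}^{*}$ of the preceding theorem is proper. There is no real obstacle here; the only subtlety worth flagging is to be explicit about the identification $\varphi\mid_{\mathcal{X}^{*}}=\phi^{-1}$, so that testing whether $\tfrac{1}{2}^{*}\in\varphi^{-1}(Y)$ is genuinely the same as testing whether $\tfrac{1}{2}$ itself appears in some $A_{n}$.
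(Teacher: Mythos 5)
Your proposal is correct and follows essentially the same route as the paper's own justification: $\tfrac{1}{2}^{*}=\phi(\tfrac12)\in\phi(X)=\mathcal{X}^{*}$, while $\tfrac12\notin\bigcup_{n}A_{n}$ because $2k=3^{2n-3}$ has no integer solution ($3^{2n-3}$ being odd), so $\tfrac{1}{2}^{*}\notin\varphi^{-1}(Y)$. The explicit parity check and the remark that $\varphi\mid_{\mathcal{X}^{*}}=\phi^{-1}$ are just slightly more detailed versions of what the paper leaves implicit.
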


Let us exhibit a \textsc{fas} of this type for the rationals in the unit interval.

\begin{ej} \label{ex:unnested} Let $I=[0,1]$ be the unit interval. We want a \textsc{fas} of $I$ such that the approximations $\cup_{n\in\mathbb{N}}A_n$ are the rational numbers in the unit interval $\mathbb{Q}\cap I$. One way of doing this could be using the approximations consisting of subdivisions of the unit interval using $n+1$ points $\frac{1}{n}$ apart, for each $\todon$. But, it turns out that there is no appropriate sequence of $\varepsilon_n$ to make it a \textsc{fas}. Instead, we pick the following not nested \textsc{fas}. Define \[A_n=\left\lbrace\frac{k}{n^{2n-1}}:k=0,1,\ldots,n^{2n-1}\right\rbrace\text{ and }\varepsilon_n=\frac{1}{n^{2n-1}},\] for every $\todon$. We have a well defined \textsc{fas} and it is a simple but satisfying exercise for the reader to verify it, with $\gamma_n=\frac{\varepsilon}{2}=\frac{1}{2n^{2n-1}}$ for every $\todon$. 
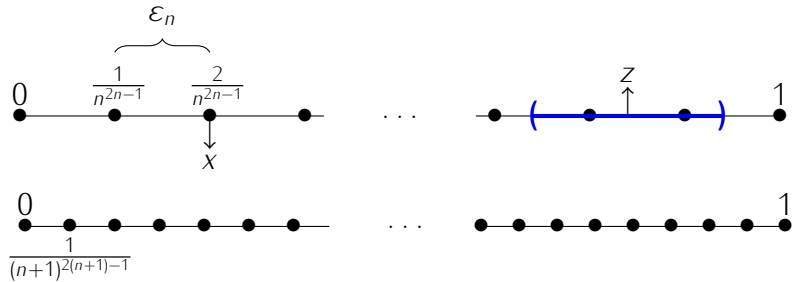
\begin{figure}[h!]
\captionsetup{width=.8\linewidth}
	\centering
	\begin{tikzpicture}[scale=10]
		\draw (0,0) -- (0.4,0);
		\draw (0.6,0) -- (1,0);
		\draw [above] node at (0,0){0};
		\draw [above] node at (1,0){1};
		\draw [above] node at (0.13,0){$\frac{1}{n^{2n-1}}$};
		\draw [above] node at (0.26,0){$\frac{2}{n^{2n-1}}$};
		\foreach \x in {0,1/8,2/8,3/8,5/8,6/8,7/8,1} {\draw node at (\x,0) {$\bullet$} ;}
		\draw [] node at (1/2,0){$\ldots$};
		\draw [decorate, decoration={brace, amplitude=5pt}, yshift=0.2em] (0.125,0) -- (0.25,0) 
		node [midway, above=0.5em] {$\varepsilon_n$};
		\draw [] node at (0.8,0.02){$\uparrow$};
		\draw [above] node at (0.8,0.03){$z$};
		\draw [line width=1pt,blue] node at (0.675,0){$($};
		\draw [line width=1pt,blue] node at (0.6763,0){$($};
		\draw [line width=1pt,blue] node at (0.925,0){$)$};
		\draw [line width=1pt,blue] node at (0.9235,0){$)$};
		\draw [line width=1.5pt, blue] (0.675,0) -- (0.925,0);
		
		\draw [] node at (0.25,-0.025){$\downarrow$};
		\draw [above] node at (0.25,-0.085){$x$};
		\draw [line width=1pt,blue] node at (0.675,0){$($};
		\draw [line width=1pt,blue] node at (0.6763,0){$($};
		\draw [line width=1pt,blue] node at (0.925,0){$)$};
		\draw [line width=1pt,blue] node at (0.9235,0){$)$};
		\draw [line width=1.5pt, blue] (0.675,0) -- (0.925,0);
	\end{tikzpicture}
	\\
	\begin{tikzpicture}[scale=10]
		\draw (0,0) -- (0.4,0);
		\draw (0.6,0) -- (1,0);
		\draw [above] node at (0,0){0};
		\draw [above] node at (1,0){1};
		\foreach \x in {0,0.05882352941,...,0.4} {\draw node at (\x,0) {$\bullet$} ;}
		\foreach \x in {0.6,0.65,...,1} {\draw node at (\x,0) {$\bullet$} ;}
		\draw [] node at (1/2,0){$\ldots$};
		\draw [below] node at (0.05882352941,0){$\frac{1}{(n+1)^{2(n+1)-1}}$};
	\end{tikzpicture}
	\caption{Two consecutive steps of the reconstruction of the unit interval, $A_n$ and $A_{n+1}$. Note the irregularity in the process, the \textsc{fas} is not nested. Here, we see $X^n=\{x\}$ but $Z^n$ has two points.}
	\label{fig:irregular}
\end{figure}
This way, we have obviously that $\bigcup_{n\in\mathbb{N}} A_n\subseteq\mathbb{Q}\cap I$. Actually, we have  $\bigcup_{n\in\mathbb{N}} A_n=\mathbb{Q}\cap I$, since for any rational number $x=\frac{a}{b}$ we have that for every step multiple of the denominator, $n=kb$ with $k\in\mathbb{N}$,
\[\frac{a}{b}=\frac{a}{b}\cdot\frac{n^{2n-1}}{n^{2n-1}}=\frac{a(kb)^{2n-1}}{bn^{2n-1}}=\frac{ak^{2n-1}b^{2n-2}}{n^{2n-1}}\in A_n.\]
That is, every rational in $I$ appears in an infinite number of approximations. By Refinement \ref{refi:nesting}, we can construct a nested \textsc{fas} $\{\varepsilon_n, \mathscr{A}_n\}_{\todon}$ of $X$ with $\bigcup_{\todon}\mathscr{A}_n=\bigcup_{\todon} A_n=\mathbb{Q}\cap I$. By Theorem \ref{teo:countable}, we have that $\varphi^{-1}(\mathbb{Q}\cap I)$ is a homeomorphic copy of $\mathbb{Q}\cap I$ into $\mathcal{X}^*$. Recall from Theorem 4 in \cite{MMreconstruction} that $\mathcal{X}^*$ is homeomorphic to the original space $I$, so this \textsc{fas} is a sort of completion of unit interval from rational  to real numbers.

Taking a deeper look into the original unnested \textsc{fas} $\fas$ in Figure \ref{fig:irregular}, we find that for every stage $n$, any point $x\in A_n$ (hence rational, $x=\frac{a}{b}$) satisfies $X^n=\{x\}$ and any point $z\notin A_n$ satisfies $X^n=\left\lbrace\frac{k}{n^{2n-1}}:k=j,j+1\right\rbrace$ for some $j=0,1,\ldots n^{2n-1}$. If $X^n$ is only one point, we have also $X_n^*=\{x\}$. Recall that every rational in the unit interval appears in an infinite number of indices of the \textsc{fas}, write them as $x_1, x_2,\ldots$. Hence, we have that $X_n^*=\left\lbrace \frac{a}{b}\right\rbrace$ for $n=x_1,x_2,\ldots$. For the rest of the indices, $X_n^*$ is determined by the bonding maps. That is, \[X^*=\left(\ldots,p_{x_1,x_1-1}\left(\left\lbrace\frac{a}{b}\right\rbrace\right),\overset{x_1}{{\overset{\downarrow}{\left\lbrace\frac{a}{b}\right\rbrace}}},\ldots,p_{x_2,x_2-1}\left(\left\lbrace\frac{a}{b}\right\rbrace\right),\overset{x_2}{{\overset{\downarrow}{\left\lbrace\frac{a}{b}\right\rbrace}}},\ldots\right)\] and hence, because of its maximality, it is the unique element of the inverse limit satisfying $\varphi(X^*)=x$. So, these points satisfy the same conditions than the points of Lemma \ref{lem:inyectiva} and hence we also have that $\varphi^{-1}(\mathbb{Q}\cap I)$ is a homeomorphic copy of $\mathbb{Q}\cap I$ into $\mathcal{X}^*$ with this unnested \textsc{fas}.
\end{ej}

From the previous example, and using the Sierpinski characterization theorem \cite{Ssur} of the rationals with the subspace topology from the reals as the only countable metric space without isolated points, we have the rationals has an inverse limit description in terms of finite $T_0$ spaces.
\begin{prop}
The rationals $\mathbb{Q}$ with the usual topology from $\mathbb{R}$ are a subspace of an inverse limit of an inverse sequence of finite $T_0$ spaces.
\end{prop}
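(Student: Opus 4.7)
The plan is to combine Example \ref{ex:unnested} with Sierpi\'nski's characterization theorem, which states that up to homeomorphism there is exactly one countable metric space without isolated points, namely $\mathbb{Q}$ with its usual topology. The idea is that $\mathbb{Q}\cap I$ has already been realized as a subspace of the inverse limit $\mathcal{X}$ of finite $T_0$ spaces built from the \textsc{fas} of the unit interval, so after identifying $\mathbb{Q}\cap I$ with $\mathbb{Q}$ via Sierpi\'nski, the conclusion follows at once.

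More concretely, I would first check the hypotheses of Sierpi\'nski's theorem for $\mathbb{Q}\cap I$: it is countable, metrizable as a subspace of $\mathbb{R}$, has no isolated points (between any two rationals in $I$ lies another rational in $I$), and is nonempty. The same properties are trivially true of $\mathbb{Q}$ itself, so Sierpi\'nski's theorem yields a homeomorphism $h:\mathbb{Q}\cap I \longrightarrow \mathbb{Q}$.

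Next, I would invoke the \textsc{fas} constructed in Example \ref{ex:unnested} for $X=I=[0,1]$, for which $\bigcup_{\todon}A_n=\mathbb{Q}\cap I$. The inverse limit $\mathcal{X}=\varprojlim\left(\U{n},p_{n,n+1}\right)_{\todon}$ is by construction an inverse limit of finite $T_0$ spaces. Example \ref{ex:unnested} (via the application of Lemma \ref{lem:inyectiva} to the points that appear eventually in every approximation) shows that the restriction
\[
\varphi\mid_{\varphi^{-1}(\mathbb{Q}\cap I)}:\varphi^{-1}(\mathbb{Q}\cap I)\longrightarrow \mathbb{Q}\cap I
\]
is a homeomorphism onto its image $\mathbb{Q}\cap I$. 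Composing with $h$, we obtain a homeomorphism from $\varphi^{-1}(\mathbb{Q}\cap I)\subset\mathcal{X}$ onto $\mathbb{Q}$, exhibiting $\mathbb{Q}$ as a subspace of the inverse limit $\mathcal{X}$ of finite $T_0$ spaces, which is the desired conclusion.

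There is no real obstacle: the hard work has been done in Example \ref{ex:unnested} (producing the \textsc{fas} whose union of approximations is exactly $\mathbb{Q}\cap I$) and in Lemma \ref{lem:inyectiva} (guaranteeing injectivity of $\varphi$ on $\varphi^{-1}\left(\bigcup_n A_n\right)$). The only conceptual step is the appeal to Sierpi\'nski's theorem to pass from the bounded model $\mathbb{Q}\cap I$ to $\mathbb{Q}$ itself; this is purely a change of homeomorphism type and does not interact with the inverse-limit structure.
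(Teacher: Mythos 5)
Your proposal is correct and follows exactly the paper's own route: identify $\mathbb{Q}$ with $\mathbb{Q}\cap I$ via Sierpi\'nski's characterization theorem and then invoke Example \ref{ex:unnested}, where $\varphi^{-1}(\mathbb{Q}\cap I)$ is shown to be a homeomorphic copy of $\mathbb{Q}\cap I$ inside the inverse limit of finite $T_0$ spaces. The extra details you supply (verifying the hypotheses of Sierpi\'nski's theorem and making the composition of homeomorphisms explicit) are correct but the paper leaves them implicit.
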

\begin{proof}
By Sierpinski characterization theorem, $\mathbb{Q}$ is homeomorphic to $\mathbb{Q}\cap I$. By Example \ref{ex:unnested}, we have the desired description.
\end{proof}

\section{Topological reconstruction for countable and ultrametric spaces}\label{result}
\
Next, we consider countable compact metric spaces, compact metric spaces with a countable number of points, not to be confused with countably compact metric spaces. For them, we have a complete topological reconstruction by the inverse limit of a \textsc{fas}.
\begin{teo}\label{teo:countable}
	Let $X$ be a countable compact metric space. Then there exists a \textsc{fas} of $X$ such that the inverse limit $\mathcal{X}$ is homeomorphic to $X$.
\end{teo}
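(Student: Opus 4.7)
The plan is to exploit the countability of $X$ by arranging the finite approximations so that \emph{every} point of $X$ eventually lies in them; Lemma \ref{lem:inyectiva} will then force the fibres of $\varphi$ to be singletons, collapsing $\mathcal{X}$ onto the homeomorphic copy $\mathcal{X}^*$.

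First, I would observe that a countable compact metric space $X$ is a countable dense subset of itself. Hence Refinement \ref{refi:dense} applies with $Y=X$, producing a nested \textsc{fas} $\{\varepsilon_n, A_n\}_{\todon}$ satisfying $\bigcup_{\todon} A_n = X$. This is the only place where countability of $X$ enters: without it one could not hope to exhaust $X$ by finite sets.

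Next, fix an arbitrary $x\in X$. By construction there is some $n_0\in\mathbb{N}$ with $x\in A_{n_0}$, and by the nesting $A_{n_0}\subseteq A_{n_0+1}\subseteq\ldots$ we have $x\in A_n$ for every $n\geqslant n_0$. Lemma \ref{lem:inyectiva} then gives $X_n^*=\{x\}$ for all $n\geqslant n_0$, and consequently $\varphi^{-1}(x)=\{X^*\}$ is a singleton. Since this holds for every $x\in X$ and $\varphi$ is surjective (by the main theorem recalled from \cite{MMreconstruction}), the map $\varphi:\mathcal{X}\rightarrow X$ is bijective.

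Finally I would conclude as follows. From $\varphi\circ\phi = 1_X$ and the bijectivity of $\varphi$, every $C\in\mathcal{X}$ satisfies $C=\phi(\varphi(C))\in\phi(X)=\mathcal{X}^*$, so $\mathcal{X}=\mathcal{X}^*$. The main theorem already tells us that $\phi:X\rightarrow\mathcal{X}^*$ is a homeomorphism, hence $\mathcal{X}\cong X$, as required. The proof is essentially a bookkeeping argument on top of the previous machinery; there is no real obstacle, only the conceptual point that countability of $X$ is exactly what is needed so that Refinement \ref{refi:dense} (with $Y=X$) can drive the nested approximations to cover $X$ pointwise.
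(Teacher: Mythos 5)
Your proposal is correct and follows essentially the same route as the paper: apply Refinement \ref{refi:dense} with $Y=X$ (the paper enumerates $X=\{x_1,x_2,\ldots\}$ and arranges $x_n\in A_n$ with nesting, which is the same construction) to get $\bigcup_{\todon}A_n=X$, then use Lemma \ref{lem:inyectiva} to make $\varphi$ injective on all of $\mathcal{X}$ and conclude it is a homeomorphism. Your final paragraph deducing $\mathcal{X}=\mathcal{X}^*$ from $\varphi\circ\phi=1_X$ is in fact slightly more explicit than the paper's own wrap-up.
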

\begin{proof}
	We can write $X=\{x_1,x_2,\ldots,x_n,\dots\}$. By Theorem \ref{teo:countable} we find a \textsc{fas} $\fas$ for $X$ satisfying $x_n\in A_n$ and $A_n\subset A_{n+1}$ for every $\todon$. The first condition gives us $\bigcup_{\todon}A_n=X$ and the second one will make $\varphi$ injective on the set $$\varphi^{-1}\left(\bigcup_{\todon}A_n\right)=\varphi^{-1}(X)=\mathcal{X},$$ and then, $\varphi:\mathcal{X}\rightarrow X$ will be a homeomorphism.
\end{proof}

The second kind of compact metric spaces that are completely reconstructible by a \textsc{fas} are ultrametric spaces. An \emph{ultrametric} space $X$ is a metric space with an extra property of the distance. Instead of satisfying just the triangle inequality, they satisfy the \emph{strong triangle inequality}, that is:
$$\forall x,y,z\in X,\enspace\dist{x}{y}\leqslant\max\left\lbrace\dist{x}{z},\dist{y}{z}\right\rbrace.$$
This inequality gives us some properties that make the ultrametric spaces very special ones. For example, in ultrametric spaces, every triangle is isosceles, with the non equal side smaller than the other two. See Chapter 2 of \cite{Ra} for more properties and detailed proofs about ultrametric spaces. We shall need the following one: if two balls intersect, one is inside the other. 
\begin{prop}
For every two points $x,y$ of an ultrametric space $X$ and every $\epsilon\geqslant\delta>0$, if $\ball{x}{\epsilon}\cap\ball{y}{\delta}\neq\emptyset$, then $\ball{y}{\delta}\subseteq\ball{x}{\epsilon}$.
\end{prop}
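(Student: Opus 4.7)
The plan is a direct double application of the strong triangle inequality. First I would pick a witness $z\in\ball{x}{\epsilon}\cap\ball{y}{\delta}$, which exists by hypothesis and satisfies $\dist{x}{z}<\epsilon$ and $\dist{y}{z}<\delta$. Then, to prove the inclusion, I take an arbitrary $w\in\ball{y}{\delta}$ and reduce everything to showing $\dist{x}{w}<\epsilon$.

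The first use of the strong triangle inequality, with $y$ in the middle, gives
\[\dist{z}{w}\leq\max\{\dist{z}{y},\dist{y}{w}\}<\delta,\]
since both $z$ and $w$ lie within distance $\delta$ of $y$. A second application, now with $z$ in the middle, yields
\[\dist{x}{w}\leq\max\{\dist{x}{z},\dist{z}{w}\}<\max\{\epsilon,\delta\}=\epsilon,\]
where the final equality uses $\epsilon\geq\delta$. Therefore $w\in\ball{x}{\epsilon}$, completing the inclusion.

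There is essentially no obstacle here; the only detail to watch is that the balls are open, so every inequality needs to remain strict. This is automatic, since a maximum of two quantities each strictly below a common bound is itself strictly below that bound, and this is what lets the strict inequalities of the witness and the target point propagate through both applications of the strong triangle inequality.
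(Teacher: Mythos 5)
Your proof is correct. The paper itself gives no proof of this proposition --- it defers to Chapter 2 of the cited reference on ultrametric spaces --- and your argument (pick a witness $z$ in the intersection, then apply the strong triangle inequality twice, first through $y$ to get $\dist{z}{w}<\delta$ and then through $z$ to get $\dist{x}{w}<\max\{\epsilon,\delta\}=\epsilon$) is exactly the standard one, with the strictness of the inequalities correctly preserved at each step.
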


We want to show that, for the case of compact ultrametric spaces, there exist some special \textsc{fas} whose limit recovers the topological type of the space. The key idea here is that, for those spaces, there are very special approximations.
\begin{lem}\label{lem:ultraaprox} Let $X$ be a compact ultrametric space. For every $\epsilon>0$, there exists a finite $\epsilon$-approximation of $X$, $A=\{x_1,\ldots,x_k\}$, such that $\ball{x_i}{\epsilon}\cap\ball{x_j}{\epsilon}=\emptyset$ for every $i\neq j$.
\end{lem}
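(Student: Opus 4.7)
The plan is to use compactness to obtain a finite covering of $X$ by $\varepsilon$-balls centered at points of $X$, and then invoke the preceding proposition to show that two such balls are either disjoint or identical. After discarding the duplicates, the remaining centers form the desired approximation.

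First I would cover $X$ by the family $\{\ball{x}{\varepsilon}:x\in X\}$ and, using that $X$ is compact, extract a finite subcover $\{\ball{y_1}{\varepsilon},\ldots,\ball{y_n}{\varepsilon}\}$, so that the set $\{y_1,\ldots,y_n\}$ is already a finite $\varepsilon$-approximation of $X$, although its balls need not be pairwise disjoint. Next, I would apply the proposition just stated to two balls of equal radius $\varepsilon$: if $\ball{y_i}{\varepsilon}\cap\ball{y_j}{\varepsilon}\neq\emptyset$, the proposition yields $\ball{y_j}{\varepsilon}\subseteq\ball{y_i}{\varepsilon}$ and, swapping the roles of $i$ and $j$, also $\ball{y_i}{\varepsilon}\subseteq\ball{y_j}{\varepsilon}$; hence the two balls coincide. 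Therefore the relation $y_i\sim y_j\Longleftrightarrow\ball{y_i}{\varepsilon}\cap\ball{y_j}{\varepsilon}\neq\emptyset$ is an equivalence relation on $\{y_1,\ldots,y_n\}$, and two distinct equivalence classes determine disjoint balls.

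Finally, I would pick one representative $x_\ell$ from each equivalence class and set $A=\{x_1,\ldots,x_k\}$. By construction $\ball{x_i}{\varepsilon}\cap\ball{x_j}{\varepsilon}=\emptyset$ for $i\neq j$, and the union $\bigcup_{\ell=1}^{k}\ball{x_\ell}{\varepsilon}$ still equals the union $\bigcup_{j=1}^{n}\ball{y_j}{\varepsilon}$, which covers $X$, so $A$ remains an $\varepsilon$-approximation. There is no real obstacle here; the only content is the observation that in the ultrametric setting, balls of equal radius partition their union, which reduces the problem to pruning an arbitrary finite covering coming from compactness.
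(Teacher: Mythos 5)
Your argument is correct and follows essentially the same route as the paper: extract a finite subcover by $\epsilon$-balls and use the ultrametric fact that two intersecting balls of equal radius coincide, so discarding duplicate centers yields the desired pairwise-disjoint approximation. The only difference is that you spell out the pruning step via an explicit equivalence relation, which the paper leaves implicit.
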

\begin{proof}
	The covering by open balls $\{\ball{x}{\epsilon}:x\in X\}$ of $X$ has a finite subcover $\{\ball{x_1}{\epsilon},\ldots,\ball{x_k}{\varepsilon}\}$. So, $\{x_1,\ldots,x_k\}$ is an $\epsilon$ approximation of $X$. Now for any $i\neq j$ such that $\ball{x_i}{\epsilon}\cap\ball{x_j}{\epsilon}\neq\emptyset$ it turns out that $\ball{x_i}{\epsilon}=\ball{x_j}{\epsilon}.$
\end{proof}
We can state the reconstruction theorem about ultrametric spaces.
\begin{teo}[Ribota]
	Let $X$ be a compact ultrametric space. Then, there exists a \textsc{fas} such that its inverse limit $\mathcal{X}$ is homeomorphic to $X$. 
\end{teo}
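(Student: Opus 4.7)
I would build a \textsc{fas} whose approximations $A_n$ are the disjoint-ball approximations of Lemma \ref{lem:ultraaprox}: at each stage $n$ the balls $\{\ball{x}{\varepsilon_n}:x\in A_n\}$ are pairwise disjoint and, being a cover of $X$, partition $X$. The \textsc{fas} condition $\varepsilon_{n+1}<\frac{\varepsilon_n-\gamma_n}{2}$ yields in particular the clean estimate $2\varepsilon_{n+1}<\varepsilon_n$, which will be the quantitative engine of the argument. The goal is to upgrade the general reconstruction theorem to a homeomorphism by showing that $\varphi\colon\mathcal{X}\to X$ is injective; combined with surjectivity of $\varphi$ and the fact that $\phi$ is a section of $\varphi$ with image $\mathcal{X}^*$, injectivity then forces every element of $\mathcal{X}$ to lie in $\mathcal{X}^*$, so $\mathcal{X}=\mathcal{X}^*\cong X$.

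The geometric heart of the argument is the claim that every $C_{n+1}\in U_{2\varepsilon_{n+1}}(A_{n+1})$ is contained in a single $\varepsilon_n$-ball of the partition induced by $A_n$. This uses the ultrametric fact that two distinct balls $\ball{x_i}{\varepsilon_n}$, $\ball{x_j}{\varepsilon_n}$ are not merely disjoint but uniformly $\varepsilon_n$-separated: for $a\in\ball{x_i}{\varepsilon_n}$ and $b\in\ball{x_j}{\varepsilon_n}$, the strong triangle inequality together with $\dist{a}{x_j}\geqslant\varepsilon_n>\dist{b}{x_j}$ forces $\dist{a}{b}\geqslant\dist{a}{x_j}\geqslant\varepsilon_n$. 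If $C_{n+1}$ met two distinct $\varepsilon_n$-balls we would therefore have $\diam(C_{n+1})\geqslant\varepsilon_n>2\varepsilon_{n+1}$, contradicting $C_{n+1}\in U_{2\varepsilon_{n+1}}(A_{n+1})$. Once $C_{n+1}\subset\ball{x_i}{\varepsilon_n}$, each $c\in C_{n+1}$ has $x_i$ as its unique closest point of $A_n$, since any other $x_j\in A_n$ sits at distance $\geqslant\varepsilon_n>\dist{x_i}{c}$ from $c$; hence $p_{n,n+1}(C_{n+1})=\{x_i\}$ is a singleton.

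From here the relation $C_n=p_{n,n+1}(C_{n+1})$ forces every coordinate of every $(C_n)\in\mathcal{X}$ to be a singleton $\{a_n\}$, with $a_n\in A_n$ the representative of the $\varepsilon_n$-ball containing $a_{n+1}$. Such a sequence is Cauchy in $X$ because $\dist{a_n}{a_{n+1}}<\varepsilon_n\to 0$, so it converges to a unique $x=\varphi((\{a_n\}))$; conversely, distinct $x,y\in X$ eventually fall into different $\varepsilon_n$-balls and produce different sequences, giving injectivity. The main obstacle, and the step where ultrametricity is genuinely indispensable, is the geometric claim above: in a generic metric \textsc{fas}, points drawn from different disjoint $\varepsilon_n$-balls can be arbitrarily close, and the diameter constraint on $C_{n+1}$ then fails to localize it to a single cell of the $A_n$-partition.
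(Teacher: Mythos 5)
Your proof is correct, but it takes a genuinely different route from the paper's. The paper also starts from the disjoint-ball approximations of Lemma \ref{lem:ultraaprox}, but it then argues through the nearby maps $q_{A_n}:X\rightarrow U_{2\varepsilon_n}(A_n)$: disjointness of the balls makes each $q_{A_n}$ single-valued, the adjustment inequalities make the triangles $q_{A_n}=p_{n,n+1}\circ q_{A_{n+1}}$ commute exactly (not merely up to homotopy), and a cited result of Mardešić and Segal on surjective maps distinguishing points then identifies $X$ with $\varprojlim(A_n,p_{n,n+1})$; separately, the coordinates of an element of $\mathcal{X}$ are shown to be singletons by comparing them with the Hausdorff limit $x$. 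You instead prove the stronger and cleaner statement that the bonding map $p_{n,n+1}$ is singleton-valued on \emph{all} of $U_{2\varepsilon_{n+1}}(A_{n+1})$, via the uniform $\varepsilon_n$-separation of distinct cells of the partition together with $\diam(C_{n+1})<2\varepsilon_{n+1}<\varepsilon_n$; this yields the singleton property of coordinates with no reference to the limit point, and you finish by upgrading the general reconstruction theorem --- injectivity of $\varphi$ together with $\varphi\circ\phi=1_X$ forces $\mathcal{X}=\phi(X)=\mathcal{X}^*\cong X$ --- thereby avoiding the external shape-theoretic citation. Both arguments hinge on the same geometric fact, but yours is more self-contained and isolates exactly where ultrametricity enters. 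One step you should write out in full: your closing sentence establishes injectivity of $\phi$ (distinct points give distinct sequences), whereas what is needed is injectivity of $\varphi$, i.e.\ that the sequence $(a_n)$ is determined by its limit $x$. This does hold: the strong triangle inequality gives $\dist{a_n}{x}\leqslant\sup_{k\geqslant n}\dist{a_k}{a_{k+1}}<\varepsilon_n$, so $x\in\ball{a_n}{\varepsilon_n}$ for every $n$, and the pairwise disjointness of the balls $\left\lbrace\ball{a}{\varepsilon_n}:a\in A_n\right\rbrace$ then pins down $a_n$ as the unique center whose $\varepsilon_n$-ball contains $x$; this same estimate also justifies the Cauchy claim, which does not follow from $\dist{a_n}{a_{n+1}}\to 0$ alone.
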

\begin{proof}
	Let us consider any \textsc{fas} of $X$ in which, for every $\todon$, $A_n$ satisfies the property stated in the previous lemma. Consider, for every $\todon$, the so called \emph{nearby map} \mapeo{q_{A_n}}{X}{U_{2\varepsilon_n}(A_n)}{x}{\left\lbrace a\in A_n:\dist{a}{x}=\dist{A_n}{x}\right\rbrace.} This map is shown in \cite{MMreconstruction} to be well defined and continuous and it is easy to check that for every $C\in U_{2\varepsilon_{n+1}}(A_{n+1})$, \[p_{n,n+1}(C)=\bigcup_{c\in C}q_{A_n}(C).\]
	Then, for every $x\in X$ and every $\todon$, we have that $\card(q_{A_n}(x))=1$: Let us suppose that $a_1,a_2\in q_{A_n}(x)$. Then, $\dist{x}{a_1},\dist{x}{a_2}<\gamma_n<\epsilon_n$ but, in that case, we will have that $x\in\ball{a_1}{\epsilon_n}\cap\ball{a_2}{\epsilon_n}$ which is not possible. Then, $q_{A_n}:X\rightarrow A_n$ is actually a single valued continuous map. Moreover, if we restric to $A_{n+1}$, we obtain that $$q_{A_n}\mid_{A_{n+1}}=p_{n,n+1}\mid_{A_{n+1}}:A_{n+1}\longrightarrow A_n$$ is a continuous map. So, it makes sense to write the following diagram, 
	$$\xymatrix @C=15mm{X\ar[dr]^{q_{A_n}} \ar[d]_{q_{A_{n+1}}} & \\
		A_{n+1}\ar[r]_{p_{n+1,n}} & A_n,}$$ which, moreover, is commutative (compare with Proposition 2 of \cite{Mmultivalued}, where the same diagram is shown to be commutative, but up to homotopy type, for general compact metric spaces). If it would not be the case, then there would exist $a_1,a_2\in A_n$ with $q_{A_n}(x)=a_1$ and $p_{n,n+1}q_{A_{n+1}}(x)=a_2$. Clearly, $\dist{x}{a_1}<\epsilon_n$, but also 
	\begin{eqnarray*}\dist{x}{a_2} & \leqslant & \dist{x}{q_{A_{n+1}}(x)}+\dist{q_{A_{n+1}}(x)}{p_{n,n+1}q_{A_{n+1}}(x)}<\\ & < & \gamma_{n+1}+\gamma_n<\epsilon_{n+1}+\gamma_n<\frac{\varepsilon_n-\gamma_n}{2}+\gamma_n<\epsilon_n.
	\end{eqnarray*} 
	and this is imposible, since then $x\in\ball{a_1}{\epsilon_n}\cap\ball{a_2}{\epsilon_n}$. Adding that $q_{A_n}$ is always a surjective map distinguishing points of $X$ (see Corollary 3 on page 61 of \cite{MSshape}), we have that $X$ is the inverse limit $X=\lim_{\leftarrow}(A_n,p_{n,n+1})$. Now, it remains to see that every element of the inverse limit $C=(C_1,C_2,\ldots,C_n,\ldots)\in\mathcal{X}$ satisfies that $\card({C_n})=1$ for every $\todon$. If not, for any pair $a_1,a_2\in C_n$ we would have that $\dist{x}{a_1},\dist{x}{a_2}<\epsilon$, with $x=\lim_H\{C_n\}$, which, again, is not possible. So, we have that $$\mathcal{X}=\lim_{\leftarrow}\left(U_{2\epsilon_n}(A_n),p_{n,n+1}\right)_{n\in\mathbb{N}}=\lim_{\leftarrow}(A_n,p_{n,n+1})_{n\in\mathbb{N}}=X,$$ concluding the proof.
\end{proof}

We conclude with a question that may have been lingering in the reader's mind from the beginning.
\begin{preg} What kind of compact metric spaces are topologically reconstructible by a \textsc{fas}?
\end{preg}

\paragraph{Acknowledgments} The author is grateful to his thesis advisor, M.A. Morón, for his help regarding these results. 
\paragraph{Funding} This work has been partially supported by the research project PGC2018-098321-B-I00(MICINN). The author has been also supported by the FPI Grant BES-2010-033740 of the project MTM2009-07030 (MICINN).
	
\addcontentsline{toc}{chapter}{References}
\bibliographystyle{siam}
\bibliography{Countable}
	
	\vspace{1cm}
	\noindent\textsc{Diego Mondéjar}\\
	Departamento de Matemáticas\\
	CUNEF Universidad\\
	\texttt{diego.mondejar@cunef.edu}
	
\end{document}